\title{Grid homology for spatial graphs and a skein exact sequence}
\author{Zipei Zhuang}
\newtheorem{definition}{Definition}%
\newtheorem{theorem}{Theorem}%
\newtheorem{lemma}{Lemma}%
\newtheorem{corollary}{Corollary}%
\newtheorem{proposition}{Proposition}
\begin{document}
	
	\maketitle
	
	\begin{abstract}
		We defined a grid homology theory for spatial graphs, which is slightly different from the one in  \cite{Harvey_2017}  . We showed that the skein exact sequence of singular knots in  \cite{https://doi.org/10.1112/jtopol/jtp032}    can  be extended to our grid homology for spatial graphs. 
	\end{abstract}

\section{Introduction}
  Knot Floer homology is an invariant for knots in $S^3$, defined using Heegaard diagrams and holomorphic disks, see \cite{OZSVATH200458} \cite{MR2704683}.
  In  \cite{nmsl}   , knot Floer homology was generalized to singular knots in $S^3$. An skein exact sequence was constructed in  \cite{https://doi.org/10.1112/jtopol/jtp032}   , relating the Floer homology of a knot and two resolutions at some crossing.  
  Iterating the exact sequence, they arrived at a description of the knot Floer homology groups of an arbitrary knot in terms of the knot Floer homology groups of fully singular knots, which can be explicitly calculated. This gives rise to a cube of resolution for knot Floer homology, which provides conjectural relations with Khovanov-Rozansky homology, see \cite{MR3229041} \cite{Dowlin_2018}.
  
  In this paper, we want to generalize the skein exact sequence to the case of spatial graphs. Heegaard Floer homology of spatial graphs was defined in \cite{bao2018floer} and in \cite{Harvey_2017} combinatorially using grid diagrams. We adopted  \cite{Harvey_2017}  's construction, with one difference: we represent the vertices of a spatial graph using $X$'s (see Section \ref{2} for explicit definition) , while the authors of   \cite{Harvey_2017}   used $O$'s . This is in accordance with the definition in  \cite{nmsl}   for singular knots. This does not need much additional work: most arguments in \cite{Harvey_2017} can be used in our definition, with little modification. There are also some differences: e.g. compare Prop.\ref{1}   and Proposition 4.21 in  \cite{Harvey_2017}.
  
  In Section 1, we reviewed the grid diagram representation of a spatial graph; In Section 2, we constructed the grid homology of a spatial graph and proved some basic properties; In Section 3, we developed a skein exact sequence for a spatial graph at a vertice.

 \section{Grid diagrams of a spatial graph}\label{2}
 
 \begin{figure}[t]
 	\def\svgwidth{\columnwidth}
\begingroup%
  \makeatletter%
  \providecommand\color[2][]{%
    \errmessage{(Inkscape) Color is used for the text in Inkscape, but the package 'color.sty' is not loaded}%
    \renewcommand\color[2][]{}%
  }%
  \providecommand\transparent[1]{%
    \errmessage{(Inkscape) Transparency is used (non-zero) for the text in Inkscape, but the package 'transparent.sty' is not loaded}%
    \renewcommand\transparent[1]{}%
  }%
  \providecommand\rotatebox[2]{#2}%
  \newcommand*\fsize{\dimexpr\f@size pt\relax}%
  \newcommand*\lineheight[1]{\fontsize{\fsize}{#1\fsize}\selectfont}%
  \ifx\svgwidth\undefined%
    \setlength{\unitlength}{209.76377953bp}%
    \ifx\svgscale\undefined%
      \relax%
    \else%
      \setlength{\unitlength}{\unitlength * \real{\svgscale}}%
    \fi%
  \else%
    \setlength{\unitlength}{\svgwidth}%
  \fi%
  \global\let\svgwidth\undefined%
  \global\let\svgscale\undefined%
  \makeatother%
  \begin{picture}(1,0.7027027)%
    \lineheight{1}%
    \setlength\tabcolsep{0pt}%
    \put(0,0){\includegraphics[width=\unitlength,page=1]{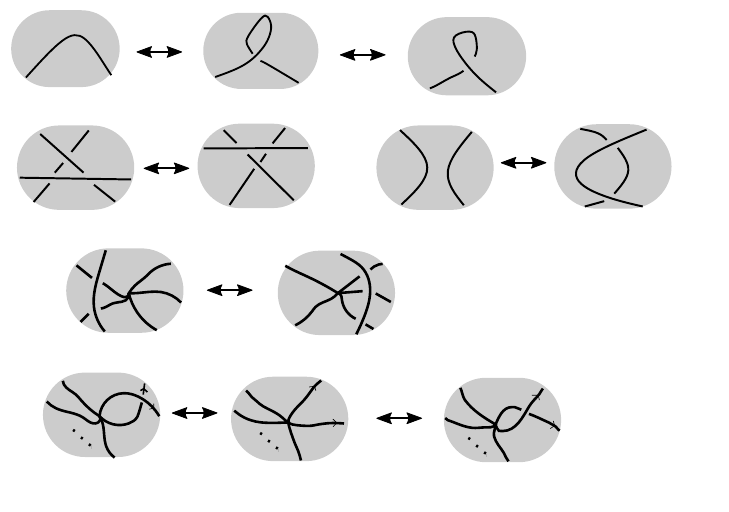}}%
    \put(0.20857859,0.64581763){\color[rgb]{0,0,0}\makebox(0,0)[lt]{\lineheight{1.25}\smash{\begin{tabular}[t]{l}RI\end{tabular}}}}%
    \put(0.48836347,0.64437601){\color[rgb]{0,0,0}\makebox(0,0)[lt]{\lineheight{1.25}\smash{\begin{tabular}[t]{l}RI\end{tabular}}}}%
    \put(0.21399622,0.48328887){\color[rgb]{0,0,0}\makebox(0,0)[lt]{\lineheight{1.25}\smash{\begin{tabular}[t]{l}RII\end{tabular}}}}%
    \put(0.69887367,0.49412412){\color[rgb]{0,0,0}\makebox(0,0)[lt]{\lineheight{1.25}\smash{\begin{tabular}[t]{l}RIII\end{tabular}}}}%
    \put(0.29616351,0.31805129){\color[rgb]{0,0,0}\makebox(0,0)[lt]{\lineheight{1.25}\smash{\begin{tabular}[t]{l}RIV\end{tabular}}}}%
    \put(0.25462839,0.14829904){\color[rgb]{0,0,0}\makebox(0,0)[lt]{\lineheight{1.25}\smash{\begin{tabular}[t]{l}RV\end{tabular}}}}%
    \put(0.5333453,0.14505149){\color[rgb]{0,0,0}\makebox(0,0)[lt]{\lineheight{1.25}\smash{\begin{tabular}[t]{l}RV\end{tabular}}}}%
  \end{picture}%
\endgroup%

 	\caption{The graph Reidemeister moves} 
 	\label{graph1}
 \end{figure}

  A spatial grgaph is an embedding of an oriented graph in $S^3$. As in the case of knots and links, we can represent a spatial graph by its diagram on the plane. Two spatial graphs are \textbf{equivalent} if they are connected by a finite number of graph Reidemeister moves (see Figure )  and planar isotopies.
  
  The first three graph Reidemeister moves are just the usual Reidemeister moves for knots and links. Note that the \textbf{RV} moves can be operated only for edges with the same orientation(both incoming or outgoing).
  
  In  \cite{Harvey_2017}  , a spatial graph (under this equivalence relation) is called a transverse spatial graph, indicating that at each vertex $v$, the incoming and outgoing edges are separated by a small disk, which intersect the spatial graph at $v$. The ambient isotopies between two transverse spatial graphs must preserve the disks.   For brevity, we omit the terminology "transverse": any spatial graph in this paper is viewed as a transverse spatial graph.
  
  A (planar) \textbf{grid diagram} is an $n \times n$ grid. Each square is empty, decorated with an $O$ or $X$ such that there is \textbf{exactly} 1 $X$ in each row or column.
  
  If we identify the top boundary segment with the bottom one, and the left boundary segment with the right one, we get a \textbf{toroidal grid diagram}. By definition, two planar grid diagrmas give rise to the same toroidal grid diagram if and only if they are related by cyclic permutations, i.e. cyclicly permute the order of their rows or columns.
  
  A grid diagram $\mathbb{G}$ specifies a spatial graph: Draw oriented segments connecting the $X$-marked squares to the $O$-marked squares in each column; then draw oriented segments connecting the $O$-marked squares to the $X$-marked squares in each row, with the convention that the vertical segments always cross above the horizontal ones. Note that under this rule, a vertex with valance $>2$ of the spatial graph must correspond to an $X$ in the grid diagram. We will call these $X$'s the \textbf{vertex X}'s, and the other \textbf{standard X}.
  
\begin{figure}
	
	\includegraphics[width=\linewidth]{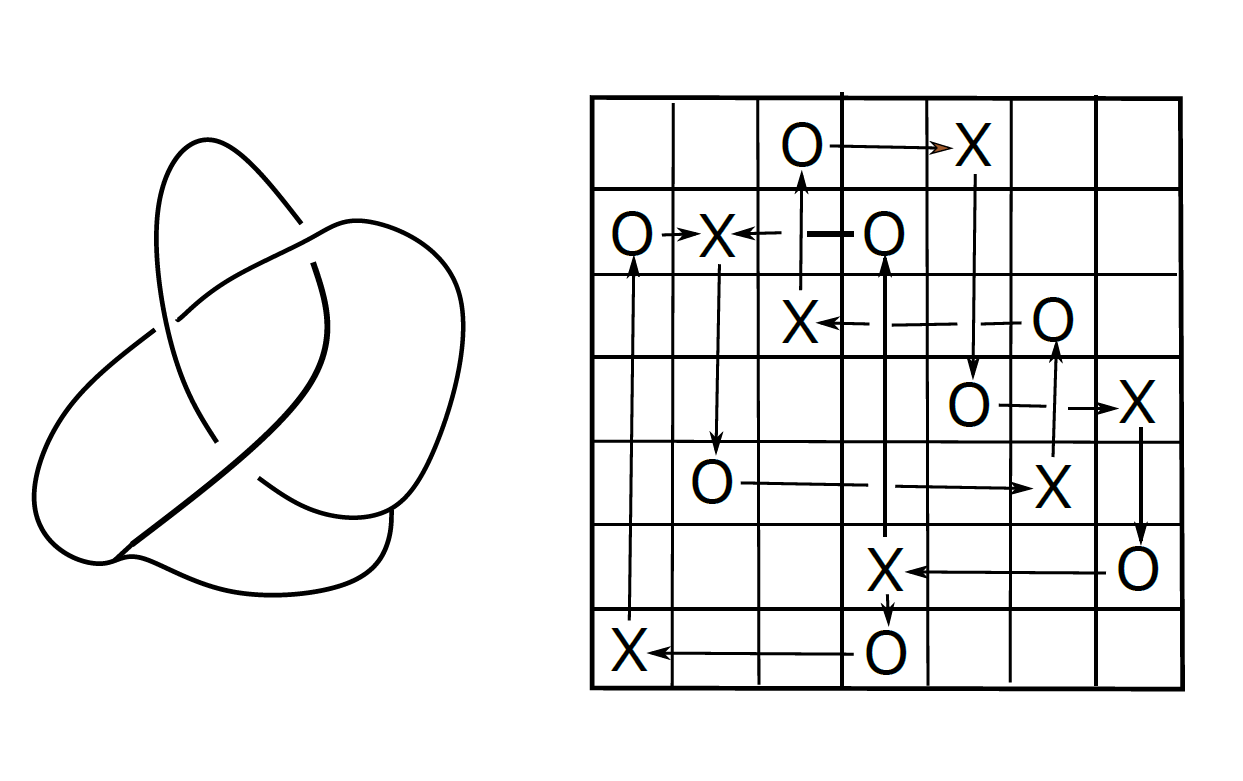}
	\caption{A spatial graph and a grid diagram for it} 
	\label{figure2}
\end{figure}

  The following properties has been proved in \cite{Harvey_2017}
  (Note that we use $X$-markings to represent vertices of the spatial graph, while in \cite{Harvey_2017}  they use $O$-markings. Of course this difference does not affect the validity of the proof ):
  
  \begin{theorem}
  	(1) Any spatial graph can be represented by a grid diagram.
  	
  	(2)If $g$ and $g'$ are two grid diagrams representing the same spatial graph, then $g$ and $g'$ are related by a finite sequence of graph grid moves.
  \end{theorem}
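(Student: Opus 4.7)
My plan is to adapt the proof from \cite{Harvey_2017}, since the only difference between their setup and ours is which marking, $X$ or $O$, records the vertices. For part (1), I would start from an arbitrary diagram of the spatial graph and first put it into a rectilinear form: a planar isotopy in which every edge is a concatenation of horizontal and vertical segments, all crossings are transverse with the vertical strand going over the horizontal one, and each vertex sits at a corner where its (vertical) incoming edges and (horizontal) outgoing edges meet on the two sides of the prescribed separating disk. After a rescaling, this rectilinear diagram lies on an $n\times n$ grid. I would then decorate the squares so that each vertex of valence $\geq 2$ becomes a \textbf{vertex X}, each bend of an edge contributes a standard $X$/$O$ pair determined by orientation, and the row/column "exactly one $X$" condition is arranged by inserting extra stabilization corners if two markings happen to share a row or column.

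For part (2), the idea is to verify that every move which can change the grid while preserving the spatial graph, namely planar isotopy of the rectilinear projection together with the graph Reidemeister moves \textbf{RI}--\textbf{RV}, can be realized by a finite sequence of graph grid moves. Planar isotopy of a rectilinear diagram is standard and reduces to commutations (and stabilization/destabilization when a row or column must be created or destroyed); the moves \textbf{RI}--\textbf{RIII} are handled exactly as in the knot case, so the substantive new content is \textbf{RIV} and \textbf{RV}, where strands interact with a vertex. These are realized by stabilization/destabilization at the vertex $X$ together with commutations that slide adjacent rows and columns past the vertex.

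The step I expect to be the main obstacle is precisely the treatment of the two vertex moves \textbf{RIV} and \textbf{RV}: one has to exhibit explicit sequences of commutations and (possibly new) stabilizations involving a vertex $X$ that realize the relevant graph moves, and check that the rule "vertex X" versus "standard X" is respected throughout. This is exactly where the $X \leftrightarrow O$ swap with \cite{Harvey_2017} might matter. However, at this purely combinatorial level the swap is a relabeling of markings, and since the allowed grid moves have been defined accordingly in Section \ref{2}, their argument carries over verbatim. I would therefore finish the proof by referring the reader to the corresponding lemmas in \cite{Harvey_2017}, remarking only on the places where $O$ and $X$ must be interchanged.
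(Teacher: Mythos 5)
Your proposal is correct and matches the paper's treatment: the paper does not reprove this theorem but simply cites the proof in \cite{Harvey_2017}, observing as you do that the $X\leftrightarrow O$ relabeling of vertex markings does not affect the argument. The only quibble is a convention slip in your rectilinear description (in this paper's grid convention the vertical segments run from $X$ to $O$ and so are \emph{outgoing} at a vertex $X$, while the horizontal ones are incoming), which does not affect the substance.
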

  
  For an vertex $X$, let the set of $O$'s that appear in a row or column with this $X$ be its \textbf{flock}. A flock is in \textbf{L-formation}, if the $O$'s are all to the right and below of the $X$(See Figure    ). A \textbf{prefered} grid diagram is one that all vertex $X$'s have their flocks in L-formation. 
  
  We need a further lemma in Section  \ref{3}    , which is proved in \cite{Harvey_2017}:

  \begin{lemma} \label{lemma1}
  	Any spatial graph can be represented by a preferred grid diagram.
  \end{lemma}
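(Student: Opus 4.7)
The plan is to construct a preferred grid diagram directly from a plane diagram of the spatial graph $G$, rather than to transform an arbitrary grid diagram (as provided by the preceding theorem) into a preferred one.

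First, I would fix a plane diagram of $G$ equipped with the transverse disk data at each vertex $v$; by definition this disk separates the incoming from the outgoing half-edges at $v$. Because the incoming and outgoing half-edges occupy disjoint arcs of the disk boundary, a small planar isotopy near $v$ suffices to route all incoming edges so that they approach $v$ horizontally from the right and all outgoing edges so that they leave $v$ vertically going downward. This local normalization does not alter the equivalence class of the underlying transverse spatial graph.

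Next, I would perform a planar isotopy of the whole diagram so that every edge is a concatenation of horizontal and vertical segments, every crossing is a transverse intersection of a vertical over a horizontal segment, and the local L-shapes at the vertices are preserved. One then overlays a sufficiently fine grid so that every vertex, crossing, and corner of a segment lies in its own cell, and places $X$'s and $O$'s in the standard way: a vertex $X$ at each vertex of $G$ and standard $X$'s and $O$'s at the ends of straight segments, with the usual crossing convention. By construction the flock of each vertex $X$ consists precisely of the $O$'s at the opposite ends of the incoming horizontal segments and the outgoing vertical segments at $v$; these $O$'s therefore lie strictly to the right of, or strictly below, the vertex $X$, so the flock is in L-formation.

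The main obstacle I expect is making the local normalizations at all vertices simultaneously compatible with the global rectilinearization: rerouting the edges near one vertex may force spurious crossings close to another, and the rectilinearization must respect every local L-shape at once. I would address this by first shrinking the local L-pictures to sufficiently small neighborhoods (using compactness of the diagram) so that the normalizations at distinct vertices do not interfere, and then, whenever merging the local pictures into a single grid requires extra room, by inserting a grid stabilization adjacent to the offending vertex $X$ before continuing. Iterating over the finitely many vertices then yields a preferred grid diagram representing $G$.
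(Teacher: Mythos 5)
The paper does not actually prove this lemma --- it defers entirely to \cite{Harvey_2017} --- so there is no in-text argument to match yours against. Your direct construction is essentially sound and is one of the two standard routes: instead of taking an arbitrary grid diagram (already supplied by part (1) of the preceding theorem) and normalizing each flock by commutations and stabilizations, you rebuild the grid from a rectilinear planar diagram after imposing the L-shape locally at each vertex. You use the transverse structure correctly: it is exactly what makes the incoming and outgoing half-edges occupy disjoint arcs of directions around $v$, so they can be swept into cones pointing east and south respectively, which under the paper's drawing convention (horizontal segments run from an $O$ to the $X$ of its row, vertical ones from an $X$ to the $O$'s of its column) is precisely L-formation. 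The self-contained route buys independence from the grid-move calculus at the cost of re-verifying the representability bookkeeping, and two such points need more care than you give them. First, each edge must leave its initial vertex by a vertical segment and enter its terminal vertex by a horizontal one, with corners alternating $O$ (vertical-to-horizontal) and standard $X$ (horizontal-to-vertical); this may force an extra jog to fix the parity. Second, your genericity condition that every feature lie ``in its own cell'' is too weak: you need every $X$ to occupy its own row and column, and no $O$ may lie in the row or column of a vertex $X$ except the ones adjacent to it, since any stray $O$ there is automatically joined to $X_v$ by the drawing rule, corrupting both the graph and the L-formation. With those standard adjustments the argument goes through.
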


  \section{Grid homology for spatial graphs}
  \subsection{The grid chain complex}
   From now on we always assume our graph  has no vertices with only incoming edges or only outgoing edges. Equvalently, the grid diagrams representing these graphs have at least 1 $O$ in each row and column.
   
   Let $G$ be a $n \times n$ grid diagram representing a spatial graph $G$. A \textbf{grid state} for $\mathbb{G}$ is an $n$-tuple of points $\textbf{x}=\{x_1,...x_n\}$ in the torus, with the property that each horizontal circle and each vertical circle contains exactly 1 of the elements of $\textbf{x}$. The set of grid states for $\mathbb{G}$ is denoted $\textbf{S}(\mathbb{G})$. For $\textbf{x},\textbf{y} \in \textbf{S}(\mathbb{G}) $, let Rect(\textbf{x},\textbf{y}) denote the set of rectangles from $\textbf{x}$ to $\textbf{y}$. A rectangle $r \in Rect(\textbf{x},\textbf{y})$ is called an \textbf{empty rectangle} if $\textbf{x} \cap Int(r) =\textbf{y} \cap Int(r) = \emptyset $. The set of empty rectangles from $\textbf{x}$ to $\textbf{y}$ is denoted $Rect ^\circ(\textbf{x},\textbf{y})$.
   
   Suppose that $\mathbb{G}$ has m $O$ markings, denoted $O_1,O_2,...O_m$, and
   
   Denote the $O$ markings of $\mathbb{G}$ by $O_1,O_2,...O_m$, assign a variable $V_i$ to each $O_i$. Write $\mathcal{R}= \mathbb{F}[V_1,...,V_m]$, where $\mathbb{F}=\mathbb{Z}/2\mathbb{Z}$. Define
   \begin{equation}
   	C^-(\mathbb{G})= \text{the free } \mathbb{F}[U_1,...,U_m]- \text{module generated by } \textbf{S}(\mathbb{G})
   \end{equation}
and $\partial ^- :C^-(\mathbb{G}) \longrightarrow C^-(\mathbb{G})$ is the $\mathcal{R}$-module homomorphism defined by 
\begin{equation}
	\partial ^-(\textbf{x}) = \sum_{\textbf{y} \in \textbf{S}(\mathbb{G})}  \sum_{\substack{r \in Rect^\circ(\textbf{x}, \textbf{y})  \\Int(r) \cap \mathbb{X}= \emptyset}} V_1^{O_1(r)}\cdots V_n^{O_n(r)} \cdot \textbf{y}
\end{equation}
   The relative Maslov grading $M:\textbf{S}(\mathbb{G}) \longrightarrow \mathbb{Z}$ is defined by
   \begin{equation}
   	M(\textbf{x})-M(\textbf{y})= 1-2\#(r \cap \mathbb{O}) +2\#(\text{x} \cap Int(r))
   \end{equation}
for any $r \in $ $Rect(\textbf{x},\textbf{y})$.
   The relative Alexander grading $A:\textbf{S}(\mathbb{G}) \longrightarrow \mathbb{Z}$
   is defined by
   \begin{equation}
   	A(\textbf{x})- A(\textbf{y}) =\#(r \cap \mathbb{X}) -\# (r \cap \mathbb{O})
   \end{equation}
for any $r \in $ $Rect(\textbf{x},\textbf{y})$.
The gradings can be extended to $C^-(\mathbb{G})$ by defining 	M($V_i$)=$-2$, A($V_i$)=$-1$ for any $i$.

As in the case of knots, the homomorphism $\partial^-$ is a differential of degree $(-1,0)$, i.e. it decreases the Maslov grading by 1 and preserves the Alexander grading.

\begin{proposition} \label{1}
	 Let $\mathbb{G}$ be a grid diagram for the spatial graph $G$.
	
	(1)If $O_i, O_j$ lie on the interior of the same edge, the multiplication by $V_i$ is chain homotopic to multiplication by $V_j$. More generally, this is true when $O_i$ and $O_j$ can be connected by an arc on the spatial graph which does not intersect any vertex with valance $>2$.
	
	(2) Let $v$ be a vertex of $G$, $e_{i_1},...,e_{i_s}$ the incoming edges at $v$, and $e_{j_1},...,e_{j_t}$ the outgoing edges at $v$. Choose an $O_k$ on each $e_k$.   Then multiplication by $U_{i_1}\cdots U_{i_s}$ is chain homotopic to multiplication by $U_{j_1}\cdots U_{j_t}$.
\end{proposition}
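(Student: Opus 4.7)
For part (1), my plan is to first establish the result in the local case where $O_i$ and $O_j$ are adjacent along the arc, separated by a single standard $X$-marking $X_\ell$, so that (up to relabeling) $O_i$ sits in the row of $X_\ell$ and $O_j$ in its column. In this situation I would define the chain homotopy $H_\ell \colon C^-(\mathbb{G}) \to C^-(\mathbb{G})$ by
\begin{equation*}
H_\ell(\mathbf{x}) = \sum_{\mathbf{y} \in \mathbf{S}(\mathbb{G})} \sum_{\substack{r \in Rect^\circ(\mathbf{x},\mathbf{y}) \\ Int(r) \cap \mathbb{X} = \{X_\ell\}}} V_1^{O_1(r)}\cdots V_m^{O_m(r)} \cdot \mathbf{y},
\end{equation*}
counting those empty (with respect to $\mathbb{O}$) rectangles whose interior meets $\mathbb{X}$ exactly in $X_\ell$. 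Computing $\partial^- H_\ell + H_\ell \partial^-$ through the standard pairing argument on juxtapositions of two rectangles, the non-cancelling contributions come only from the thin annular domains through $X_\ell$. Because $X_\ell$ has valence $2$, its row carries exactly one $O$-marking (namely $O_i$) and its column carries exactly one ($O_j$), so these annuli contribute precisely $V_i + V_j$, giving $V_i \sim V_j$ up to chain homotopy. The general case then follows by concatenating these adjacency relations along the arc connecting $O_i$ to $O_j$; the hypothesis that this arc avoids vertices of valence $>2$ guarantees that every intervening $X$ on the arc is standard, so the local step applies at each link of the chain.

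For part (2), I would first apply part (1) on each edge $e_{i_k}$ (resp.\ $e_{j_k}$) to reduce to the case where each chosen $O$-marking sits in the row (resp.\ column) of the vertex $X_v$; this reduction is permitted because the interior of an edge contains no vertex of valence $>2$. I would then introduce the analogous homotopy $H_v$ given by the same formula as above but with $X_v$ in place of $X_\ell$. The same pairing analysis again reduces $\partial^- H_v + H_v \partial^-$ to the contributions of thin annuli through $X_v$. The new feature compared with part (1) is that the row and column of $X_v$ each carry several $O$-markings: the horizontal annulus through the row of $X_v$ now contains $X_v$ together with all of $O_{i_1},\dots,O_{i_s}$, while the vertical annulus through its column contains $X_v$ together with $O_{j_1},\dots,O_{j_t}$. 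Reading off $O$-multiplicities then yields
\begin{equation*}
\partial^- H_v + H_v \partial^- = V_{i_1}\cdots V_{i_s} + V_{j_1}\cdots V_{j_t},
\end{equation*}
establishing the desired chain homotopy.

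The step I expect to be the most delicate is this domain analysis in part (2): since the row and column of $X_v$ each carry several $O$-markings, one must check that the involution-based cancellation on pairs $(r_1,r_2)$ whose union contains $X_v$ exactly once still has fixed points \emph{only} on the two thin annular domains, with no extra stray terms coming from juxtapositions that partially sweep the row or column. I would handle this by classifying the possible shapes of $r_1 \cup r_2$ (disjoint rectangles, rectangles sharing a corner, L- and hexagon-shaped regions, and the two thin annuli) exactly as in the Sarkar--Wang analysis for grid homology of knots, and verifying that in each non-annular case a canonical re-cutting swaps the pair and cancels its contribution mod $2$.
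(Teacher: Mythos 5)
Your proposal is correct and follows essentially the same route as the paper: the paper defines the identical homotopy $\mathcal{H}_1$ counting empty rectangles whose interior meets $\mathbb{X}$ exactly in a chosen marking $X_1$, runs the same pairing/juxtaposition analysis so that only the two thin annuli through $X_1$ survive mod $2$, and reads off $V_{i_1}\cdots V_{i_s} \sim V_{j_1}\cdots V_{j_t}$, specializing to one $O$ per row and column when $X_1$ is standard to get (1) by iteration and combining with (1) to get (2). The only cosmetic difference is order of operations in (2) (you relocate the chosen $O$'s first and then apply the homotopy at $X_v$; the paper applies the homotopy at $X_v$ first and then invokes (1)), which changes nothing.
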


\begin{figure}[t]
	\def\svgwidth{\columnwidth}
	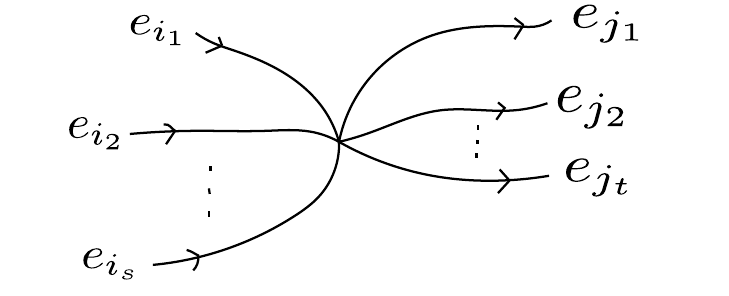 
	\label{graph3}
\end{figure}

\begin{proof}
	Choose an $X_1 \in \mathbb{X}$ on the grid diagram. Let $O_{i_1},...,O_{i_s}$ be the $O$'s that lie on the same line of $X_1$, and $O_{j_1},...O_{j_t}$ the $O$'s that lie on the same column of $X_1$.
	
	Define 
	\begin{equation}
		\mathcal{H}_1(\textbf{x})= \sum_{\textbf{y} \in \textbf{S}(\mathbb{G})}  \sum_{\substack{r \in Rect^\circ(\textbf{x},\textbf{y}) \\ Int(r) \cap \mathbb{X} =X_1}}  V_1^{O_1(r)} \cdots V_m^{O_m(r)}\cdot \textbf{y}
	\end{equation}
	
	For grid states \textbf{x} and \textbf{z}, $\psi \in \pi(\textbf{x, \textbf{z}})$, let $N(\psi)$ be the number of ways we can decompose $\psi$ as a composite of two empty rectangles $r_1 * r_2$. Then 
	\begin{equation}\label{5}
	   \partial^- \circ \mathcal{H}_1 +\mathcal{H}_1 \circ \partial ^- = \sum_{\textbf{z} \in \textbf{S}(\mathbb{G})}  \sum_{\substack{\psi \in \pi(\textbf{x},\textbf{z}) \\ \psi \cap \mathbb{X} =X_1}}  V_1^{O_1(r)} \cdots V_m^{O_m(r)}\cdot \textbf{z}
	\end{equation}
	
	As in the case for knots, when $ \textbf{x} \backslash \textbf{x} \cap \textbf{z}$ consists of 4 or 3 elements, $N(\psi)=2$, hence they contribute nothing to the right of Equation  \ref{5}  since we wwork in mod2 coefficient.
	
	When $\textbf{x}= \textbf{z}, \psi = r_1 * r_2$, in this case $r_1$ and $r_2$intersect along 2 edges and therefore $\psi$ is an annulus. Recall that our grid diagram contains an $X$ in each row and column, therefore $\psi$ is an annulus with height or width equal to 1, which contains $X_1$. 
	
	If $X_1$ is the row, then its contribution is multiplication by $V_{i_1}\cdots V_{i_s}$; If $X_1$ is the column, then its contribution is multiplication by $V_{j_1}\cdots V_{j_t}$. So we have
	\begin{equation}
		\partial^- \circ \mathcal{H}_1 + \mathcal{H}_1 \circ 	\partial^- =V_{i_1}\cdots V_{i_s}-V_{j_1}\cdots V_{j_t}
	\end{equation}
	
	If $X_1$ lies on the interior of an edge, or representing a vertex with exactly one incoming edge and one outgoing edge, then there is exactly 1 $O_i$ in the same row as $X_1$ and 1 $O_j$ in the column with $X_1$. The argument above shows that multiplication by $O_i$ is chain homotopic to multiplication by $O_j$. Iterating this procedure proves (1). (2) follows immediately from (2) and the above argument.
	
\end{proof}

	Suppose that G has n edges, and $\mathbb{G}$ has m $O$'s. Order the $O$'s so that the first n $O$'s lie on different edges. By Proposition   \ref{1}    the multiplication by any $V_i$ is chain homotopic to some $V_j, j \leq m$. In particular, they induce the same action on the homology group.

\begin{definition}
	The homology of $GC^-(\mathbb{G})$ is called the (unblocked) grid homology of $\mathbb{G}$, viewed as an $\mathbb{F}[U_1,...U_m]$-module.
\end{definition}

Define the simply blocked grid chain complex to be
\begin{equation}
	\widehat{GC}(\mathbb{G})= GC^-(\mathbb{G})/V_1=\cdots =V_n=0
\end{equation}
and the fully blocked grid complex
\begin{equation}
	\widetilde{GC}(\mathbb{G})/V_1=\cdots = V_m=0
\end{equation}
The simply blocked grid homology and fully blocked grid homology are respectively the homology of these complexes. They are $\mathbb{F}$-vector spaces.

Now we show that $\widetilde{GC}(\mathbb{G})$ is independent of the choice of the $O_1,...,O_n$.

Denote by W the 2-dimensional $\mathbb{F}$-space, with one generator in bigrading $(0,0)$, and another generator in bigrading $(-1,-1)$.
\begin{lemma}
	Let G be a spatial diagram with m edges, and $\mathbb{G}$ a grid diagram of G with m $O$'s. Then there is an isomorphism
	\begin{equation}
		\widetilde{GC}(\mathbb{G}) \cong \widehat{GH}(\mathbb{G}) \otimes W^{\otimes(m-n)}
	\end{equation} 
\end{lemma}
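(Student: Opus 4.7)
The plan is to show that each of the ``extra'' variables $V_{n+1},\dots,V_m$ acts null-homotopically on the simply blocked complex $\widehat{GC}(\mathbb{G})$, and then to peel them off one at a time, picking up a tensor factor of $W$ at each step.

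Order the $O$-markings so that $O_1,\dots,O_n$ lie on the $n$ distinct edges of $G$, one per edge. For each $i>n$ pick $j(i)\le n$ with $O_i$ and $O_{j(i)}$ on the same edge. By Proposition~\ref{1}(1), on $GC^-(\mathbb{G})$ there is a chain homotopy $\mathcal H_i$, built from rectangle counts and hence $\mathcal R$-linear by construction, satisfying $\partial\mathcal H_i+\mathcal H_i\partial=V_i+V_{j(i)}$. Reducing modulo $(V_1,\dots,V_n)$ kills $V_{j(i)}$, so $\mathcal H_i$ descends to a null-homotopy for $V_i$ on $\widehat{GC}(\mathbb{G})$. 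A bidegree count, using that $\partial$ has bidegree $(-1,0)$ and $V_i$ has bidegree $(-2,-1)$, forces the induced homotopy to have bidegree $(-1,-1)$.

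Next I would invoke the following standard algebraic fact: if $V$ is an endomorphism of bidegree $(-2,-1)$ on a bigraded chain complex $C$ (free as an $\mathbb F[V]$-module) which is null-homotopic via a homotopy $H$ of bidegree $(-1,-1)$, then there is a bigraded isomorphism $H(C/V)\cong H(C)\otimes W$. Indeed, the short exact sequence
\begin{equation}
0\longrightarrow C\xrightarrow{V} C\longrightarrow C/V\longrightarrow 0
\end{equation}
induces a long exact sequence in homology whose connecting map is multiplication by $V$ on $H(C)$, hence zero; the assignment $[x]\mapsto[Hx]$ provides an explicit splitting $H(C)\to H(C/V)$ of bidegree $(-1,-1)$, whence the decomposition $H(C/V)\cong H(C)\oplus H(C)_{(-1,-1)}=H(C)\otimes W$.

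Applying this iteratively, starting with $C_0=\widehat{GC}(\mathbb{G})$ and setting $C_k=C_{k-1}/V_{n+k}$ so that $C_{m-n}=\widetilde{GC}(\mathbb{G})$, yields the claimed isomorphism after $m-n$ steps. The main obstacle is the bookkeeping at each stage: one must verify that the null-homotopy for $V_{n+k+1}$ descends to $C_k$ and retains its bidegree. The former is automatic because the homotopies of Proposition~\ref{1} are $\mathcal R$-linear on $GC^-(\mathbb{G})$ and hence descend to every intermediate quotient; the latter follows from the same bidegree count applied to the quotient complexes.
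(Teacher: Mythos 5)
Your proposal is correct and follows essentially the same route as the paper: order the $O$'s so the first $n$ lie on distinct edges, use Proposition~\ref{1}(1) to see that each extra $V_j$ becomes null-homotopic after setting $V_1=\cdots=V_n=0$, and then split the long exact sequence of $0\to C\xrightarrow{V_j}C\to C/V_j\to 0$ to peel off one $W$ factor per step. The only difference is cosmetic: you exhibit the splitting explicitly via $[x]\mapsto[\overline{\mathcal Hx}]$, while the paper just reads off the bidegrees of the maps in the resulting short exact sequence of $\mathbb{F}$-vector spaces.
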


\begin{proof}
	We prove that 
	\begin{equation}
		H \left(\frac{GC^-(\mathbb{G})}{V_1=\cdots V_{n+k}=0}\right)   =H\left(\frac{GC^-(\mathbb{G})}{V_1=\cdots V_{n}=0}\right) \otimes W^k
	\end{equation}
inductively for k $\ge 0$.

For any $j>n$, $O_j$ lies on some edge $e_i$, then by  Prop.\ref{1}    multiplication by $V_j$ is chain homotopic to multiplication by $V_i$, so
\begin{equation}
	V_j: \frac{GC^-(\mathbb{G})}{V_1=\cdots V_{j-1}=0} \longrightarrow  \frac{GC^-(\mathbb{G})}{V_1=\cdots V_{j-1}=0}
\end{equation} 
is chain homotopic to 0 map. Hence the long exact sequence induced by
\begin{equation}
	0 \longrightarrow  \frac{GC^-(\mathbb{G})}{V_1=\cdots V_{j-1}=0} \stackrel{V_j}{\longrightarrow}  \frac{GC^-(\mathbb{G})}{V_1=\cdots V_{j-1}=0} \longrightarrow  \frac{GC^-(\mathbb{G})}{V_1=\cdots V_{j}=0} \longrightarrow 0
\end{equation}
becomes into a short exact sequence
\begin{equation}
	0 \longrightarrow H \left(\frac{GC^-(\mathbb{G})}{V_1=\cdots V_{j-1}=0}\right) \longrightarrow  H \left(\frac{GC^-(\mathbb{G})}{V_1=\cdots V_{j}=0}\right) \longrightarrow H \left(\frac{GC^-(\mathbb{G})}{V_1=\cdots V_{j-1}=0}\right) \longrightarrow 0
\end{equation}
The second arrow preserves the bigrading, and the third is homogeneous of bidegree $(1,1)$. Therefore we have
\begin{equation}
	H \left(\frac{GC^-(\mathbb{G})}{V_1=\cdots V_{j}=0}\right) = H \left(\frac{GC^-(\mathbb{G})}{V_1=\cdots V_{j-1}=0}\right) \otimes W
\end{equation}
 Iterating this process proves the proposition.

\end{proof}

\begin{corollary}
	$GC^-(\mathbb{G})$ is a finite dimensional $\mathbb{F}$-space. It is independent of the choice of the $V_i, i=1,...,n$ in the definition.
\end{corollary}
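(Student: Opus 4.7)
The statement is naturally read as a claim about the simply blocked grid homology $\widehat{GH}(\mathbb{G})$ rather than the unblocked chain complex $GC^-(\mathbb{G})$ itself, which, as a free $\mathbb{F}[V_1,\dots,V_m]$-module of rank $n!$, cannot be finite-dimensional over $\mathbb{F}$. My plan is to deduce both assertions from the preceding lemma, which supplies the isomorphism $\widetilde{GH}(\mathbb{G}) \cong \widehat{GH}(\mathbb{G}) \otimes W^{\otimes(m-n)}$ of bigraded $\mathbb{F}$-vector spaces.

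First I would establish finite-dimensionality. The fully blocked complex $\widetilde{GC}(\mathbb{G})$ is, by construction, the $\mathbb{F}$-vector space freely generated by the finite set $\textbf{S}(\mathbb{G})$ (which has cardinality $n!$), so it is finite-dimensional, and hence so is its homology $\widetilde{GH}(\mathbb{G})$. The isomorphism from the lemma then forces $\widehat{GH}(\mathbb{G})$ to be finite-dimensional as well, since $W^{\otimes(m-n)}$ is a fixed nonzero bigraded $\mathbb{F}$-space.

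For independence of the choice of $V_1,\dots,V_n$, the key observation is that $\widetilde{GC}(\mathbb{G})$ involves no such choice, being obtained by quotienting out \emph{all} the variables $V_1,\dots,V_m$ at once. Given two admissible orderings, each singling out a different set of $n$ edge-representative $O$-markings and thereby producing $\widehat{GH}(\mathbb{G})$ and $\widehat{GH}'(\mathbb{G})$, applying the lemma to each gives
\[
\widehat{GH}(\mathbb{G}) \otimes W^{\otimes(m-n)} \;\cong\; \widetilde{GH}(\mathbb{G}) \;\cong\; \widehat{GH}'(\mathbb{G}) \otimes W^{\otimes(m-n)}.
\]
It then remains only to cancel the common tensor factor $W^{\otimes(m-n)}$.

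The step I expect to require the most care is exactly this cancellation, since cancellation of tensor factors is not formal for arbitrary modules. I plan to handle it via Poincaré polynomials: for any finite-dimensional bigraded $\mathbb{F}$-vector space $A$, the polynomial $P_A(t,s)=\sum_{i,j}\dim A_{i,j}\,t^i s^j \in \mathbb{Z}[t^{\pm 1},s^{\pm 1}]$ determines $A$ up to bigraded isomorphism, and one computes $P_{A\otimes W}(t,s) = P_A(t,s)\,(1+t^{-1}s^{-1})$ directly from the bigrading of $W$. The displayed isomorphism therefore yields the identity $P_{\widehat{GH}}(t,s)\,(1+t^{-1}s^{-1})^{m-n} = P_{\widehat{GH}'}(t,s)\,(1+t^{-1}s^{-1})^{m-n}$ in the integral domain $\mathbb{Z}[t^{\pm 1},s^{\pm 1}]$. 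Since $1+t^{-1}s^{-1}$ is nonzero and hence not a zero divisor, it can be cancelled $(m-n)$ times, giving $P_{\widehat{GH}} = P_{\widehat{GH}'}$ and therefore $\widehat{GH}(\mathbb{G}) \cong \widehat{GH}'(\mathbb{G})$, which completes both assertions.
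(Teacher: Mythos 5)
Your proposal is correct and follows exactly the route the paper intends: the corollary is stated as an immediate consequence of the preceding lemma (the paper supplies no separate proof), and you correctly read the misstated $GC^-(\mathbb{G})$ as $\widehat{GH}(\mathbb{G})$, since the unblocked complex is an infinite-dimensional free $\mathbb{F}[V_1,\dots,V_m]$-module and involves no choice of edge representatives. Your Poincar\'e-polynomial argument for cancelling the common factor $W^{\otimes(m-n)}$ is a genuine detail the paper leaves implicit, and it goes through even with only relative gradings, since the ambiguity is a single monomial factor which is likewise a non-zero-divisor in $\mathbb{Z}[t^{\pm 1},s^{\pm 1}]$.
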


To show that $\widehat{GH(\mathbb{G})}, GH^-(\mathbb{G})$ are invariants of the spatial graph $G$, we have to prove that they are invariant under grid moves. The proof, however, has no essential difference from that in  \cite{Harvey_2017}. The  only drawback of our homology groups is that the usual combinatorial definition for Maslov grading cannot be generalized to this case, so we confined ourselves with relative Maslov grading, which is easily shown to be invariant under grid moves.

Finally, the homology groups are invariant under orientation-reversing:
\begin{proposition}
	The relatively graded $\mathbb{F}$ vector space $\widehat{GH(\mathbb{G})}$ and the relatively graded $\mathbb{F}[V_1,...V_n]$-module $GH^-(\mathbb{G})$ are invariant if we change the orientation of each edge of $G$.
	\end{proposition}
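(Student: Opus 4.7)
The plan is to exhibit an isomorphism of bigraded chain complexes between $GC^-(\mathbb{G})$ and $GC^-(\bar{\mathbb{G}})$, where $\bar{\mathbb{G}}$ is a grid diagram representing the spatial graph $\bar G$ obtained from $G$ by reversing every edge orientation. Since grid homology has already been shown to be an invariant of the spatial graph, such an isomorphism will imply the desired invariance of both $\widehat{GH}(\mathbb{G})$ and $GH^-(\mathbb{G})$.

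The first step is to construct $\bar{\mathbb{G}}$ from $\mathbb{G}$. In the absence of vertex $X$'s, the grid diagram of $\bar G$ is obtained simply by swapping the roles of $X$ and $O$ in every square: under the drawing convention (column segments $X \to O$ and row segments $O \to X$) this swap reverses the direction of every edge-segment while preserving the vertical-above-horizontal crossing rule, and the swapped diagram still contains exactly one marking of each type per row and column. In the presence of vertex $X$'s the naive swap fails, since a vertex $X$ sits in a row and a column containing several $O$'s; to handle this I would precede the swap by local stabilizations near each vertex $X$ that effectively transpose its row/column data, turning an $s$-incoming/$t$-outgoing configuration into a $t$-incoming/$s$-outgoing one, so as to match the valence structure at the corresponding vertex of $\bar G$.

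Next, I would define a chain isomorphism $\Phi\colon GC^-(\mathbb{G}) \to GC^-(\bar{\mathbb{G}})$ which is the identity on grid states (the underlying toroidal grid being the same up to the local stabilizations, whose invariance is part of the material from \cite{Harvey_2017}), and which identifies the variable $V_i$ attached to an old $O$-marking with the new variable attached to the same square of $\bar{\mathbb{G}}$, using Proposition \ref{1} to move between chain-homotopic variables wherever the stabilizations have shifted the combinatorics. Commutativity with the differential reduces to the bijection between empty rectangles avoiding $\mathbb{X}$ in $\mathbb{G}$ and empty rectangles avoiding $\bar{\mathbb{X}}$ in $\bar{\mathbb{G}}$ (since away from the stabilization regions $\bar{\mathbb{X}} = \mathbb{O}$), together with a direct matching of the $O$-weightings. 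For the gradings, the Maslov formula involves only $\mathbb{O}$ and the Alexander formula is symmetric in $\mathbb{X}$ and $\mathbb{O}$ up to sign; both effects change the absolute gradings only by state-independent constants, which are absorbed on passing to the relative gradings used in the statement.

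The main obstacle I anticipate is the combinatorial bookkeeping at vertex $X$'s. The $X \leftrightarrow O$ swap must be adjusted locally at each vertex of valence greater than two so that the resulting diagram satisfies the paper's convention of having exactly one $X$ per row and per column, and one has to check that the resulting diagram really does represent $\bar G$ in the sense of Section \ref{2} and not some other spatial graph. Once this local picture is nailed down, the compatibility of $\Phi$ with the rectangle combinatorics, the $\mathbb{F}[V_1, \dots, V_n]$-module structure, and the two gradings is a routine verification, and invariance of grid homology under orientation reversal follows.
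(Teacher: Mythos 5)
There is a genuine gap at the heart of your construction. Even granting that the $X \leftrightarrow O$ swap produces a diagram for the reversed graph, the map $\Phi$ you propose --- the identity on grid states --- is not a chain map. The differential of $GC^-(\mathbb{G})$ counts empty rectangles $r$ with $Int(r) \cap \mathbb{X} = \emptyset$ and weights them by the multiplicities $O_i(r)$; after the swap, the differential of $GC^-(\bar{\mathbb{G}})$ counts empty rectangles with $Int(r) \cap \mathbb{O} = \emptyset$ and weights them by the multiplicities at the old $X$-markings. These are different sets of rectangles (a rectangle avoiding $\mathbb{X}$ need not avoid $\mathbb{O}$, and vice versa) carrying different weights, so there is no ``bijection between empty rectangles avoiding $\mathbb{X}$ in $\mathbb{G}$ and empty rectangles avoiding $\bar{\mathbb{X}}$ in $\bar{\mathbb{G}}$'' compatible with the identity on states. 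Relatedly, the ground rings do not match up: the variables of $GC^-(\bar{\mathbb{G}})$ sit on the new $O$'s, i.e.\ the old $X$'s, of which there are $n$ (one per row), not on ``the same squares'' as the old $O$'s, of which there may be more. Finally, the swapped diagram violates the paper's convention of exactly one $X$ per row and column wherever a row or column of $\mathbb{G}$ carries more than one $O$ (not only at vertex $X$'s), and the ``local stabilizations'' you invoke to repair this are not specified, so one cannot check that the result represents $\bar{G}$ at all.

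The paper avoids all of this with a one-line device: reflect $\mathbb{G}$ across a diagonal. Reflection keeps $X$'s as $X$'s and $O$'s as $O$'s, so the result is again a legal grid diagram with the same marking data; it exchanges rows and columns, so under the drawing convention (vertical segments $X \to O$, horizontal segments $O \to X$) every edge orientation is reversed and the new diagram represents $G'$. The reflection then tautologically induces a bijection of grid states and of the rectangles counted in the differential, preserving both the $\mathbb{X}$-avoidance condition and the $O$-weights, and the relative Maslov and Alexander gradings depend only on the positions of the markings, which are carried along. If you want to salvage your approach you would need to replace the identity map by a genuine comparison of the two asymmetric complexes (in the knot case this is the nontrivial symmetry of knot Floer homology under exchanging the roles of $X$ and $O$), which is far more work than the reflection argument.
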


\begin{proof}
	Denote by $G'$ the spatial graph obtained by reversing all the orientations on edges of $G$. Let $\mathbb{G}$ be a grid diagram of $G$. Reflecting $\mathbb{G}$ along some diagonal, we get a grid diagram $\mathbb{G}'$ for $G'$. The reflection gives rise to a bijection from $\textbf{S}(\mathbb{G})$ to $\textbf{S}(\mathbb{G}')$, and a bijection of rectangles that appear in the differential. The relative Maslov grading and Alexander grading depends only on the position of the $O$ and $X$'s, so is also preserved by the reflection.
\end{proof}

\section{The skein exact sequence} \label{3}

Let $G$ be a spatial graph. By Lemma  \ref{lemma1}  ,there exists a preferred $m \times m$ grid diagram $\mathbb{G}$ for $G$. Suppose that $v$ is a vertex of $G$, and denote by $I(J)$ the set of incoming(outgoing) edges at $v$. Require that $|I| \ge 2, |J| \ge 2$. We separate $I,J$ into two disjoint nonempty sets: $I=A \cup B, J= C \cup D$. Write $A= \{a_1,...,a_i\}$, $B=\{b_1,...b_j\}$, $C=\{c_1,...c_k\}$, $D=\{d_1,...,d_l\}.$

We define $G_+, G_-, \mathcal{R}_G$ by modify $G$ locally at $v$, see Figure  \ref{graph4}  . For convenience, we temporarily call $G_+(G_-)$ and $\mathcal{R}_G$ the two resolutions of $G$ at $v$. Note that this differs from the usual meaning of "resolution" at a crossing.

Suppose that the vertex $v$ corresponds to $X_v \in \mathbb{X}$ of $\mathbb{G}$, and let $O_{a_1},...,O_{a_i}$ be the $O$'s that is adjacent to $X_v$ and lies on the edge $a_1,...a_i$, respectively. Write $O_a=\{O_{a_1},...,O_{a_i}\}$, $V_a=V_1 \cdots V_i$. Similarly define $O_b,O_c, O_d$, and $V_b, V_c, V_d$.

The main result of this section is the following:

\begin{theorem}\label{thm1}
	There are long exact sequences:
	\begin{equation}\label{a}
		\cdots \longrightarrow GH^-(G_+) \longrightarrow H_*\left(\frac{GC^-(G)}{V_a +V_b -V_c -V_d}\right) \longrightarrow GH^-(\mathcal{R}_G) \longrightarrow GH^-(G_+) \longrightarrow \cdots
	\end{equation}
	
		\begin{equation}\label{b}
		\cdots \longrightarrow \widehat{GH}(G_+) \longrightarrow H_*\left(\frac{\widehat{GC}(G)}{V_a +V_b -V_c -V_d}\right) \longrightarrow \widehat{GH}(\mathcal{R}_G) \longrightarrow \widehat{GH}(G_+) \longrightarrow \cdots
	    \end{equation}
	
	\begin{equation}\label{c}
		\cdots \longrightarrow GH^-(G_-)  \longrightarrow GH^-(\mathcal{R}_G) \longrightarrow H_*\left(\frac{GC^-(G)}{V_a +V_b -V_c -V_d}\right)
		\longrightarrow GH^-(G_-) \longrightarrow \cdots
	\end{equation}

		\begin{equation}\label{d}
		\cdots \longrightarrow \widehat{GH}(G_-)  \longrightarrow \widehat{GH}(\mathcal{R}_G) 
		\longrightarrow H_*\left(\frac{\widehat{GC}(G)}{U_a +U_b -U_c -U_d}\right)
		\longrightarrow \widehat{GH}(G_-) \longrightarrow \cdots
	\end{equation}

\end{theorem}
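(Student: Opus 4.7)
The plan is to mimic the construction of the skein exact sequence for singular knots from \cite{https://doi.org/10.1112/jtopol/jtp032} in the present grid-diagram framework, taking full advantage of the L-formation provided by Lemma \ref{lemma1}. First I would fix a preferred $m \times m$ grid diagram $\mathbb{G}$ for $G$ in which the flock of $X_v$ sits in L-formation, with (say) $O_a \cup O_b$ lying in the row of $X_v$ and $O_c \cup O_d$ in its column. Then I would build grid diagrams $\mathbb{G}_+$, $\mathbb{G}_-$, $\mathbb{R}$ for $G_+$, $G_-$, $\mathcal{R}_G$ by local modifications of $\mathbb{G}$ inside the window cut out by these rows and columns, so that outside this window the four diagrams agree and their grid states are in natural bijection after stabilization.

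The core of the argument is to establish a short exact sequence of chain complexes
\[
0 \longrightarrow GC^-(G_+) \longrightarrow GC^-(G)/(V_a + V_b - V_c - V_d) \longrightarrow GC^-(\mathcal{R}_G) \longrightarrow 0,
\]
whose long exact sequence in homology is \eqref{a}; reducing modulo $V_1 = \cdots = V_n = 0$ then yields \eqref{b}, and running the analogous construction with the roles of the two splittings of $\{A,B\}$ against $\{C,D\}$ swapped (so that $\mathcal{R}_G$ now plays the role of the subcomplex) gives the short exact sequences whose long exact sequences are \eqref{c} and \eqref{d}. The chain maps will be defined on generators by matching grid states across the local modification and weighting them by monomials in the $V_i$'s that record the rectangles through $X_v$ absorbed during the transition.

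The main obstacle will be identifying why the relation $V_a + V_b - V_c - V_d$ is precisely what is needed to make the maps chain maps and the sequence exact. The underlying mechanism is that among rectangles in $\mathbb{G}$ that touch $X_v$, one can group the contribution to $\partial^-$ according to whether a given rectangle absorbs an $O$ from $O_a$, $O_b$, $O_c$ or $O_d$; pairing these contributions across the three diagrams produces exactly the relation above, one level finer than the total relation $V_a V_b \sim V_c V_d$ furnished by Proposition \ref{1} (whose annulus-counting argument is a degenerate case of the matching needed here). Once the relation is seen to make the chain maps well-defined, checking injectivity of the inclusion and that the cokernel agrees with $GC^-(\mathcal{R}_G)$ reduces to a direct bookkeeping exercise on the rectangles in the three modified diagrams. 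The final step — confirming that the resulting long exact sequences are invariants of the pair $(G,v)$ — is a routine adaptation of the grid-move invariance proofs from \cite{Harvey_2017} already invoked in the preceding section.
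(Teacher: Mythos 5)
Your intuition about where the relation $V_a+V_b-V_c-V_d$ comes from (grouping the rectangles through $X_v$ according to which of the adjacent $O$-markings they absorb, thereby refining the product relation of Proposition \ref{1}) is the right one, but the homological-algebra skeleton you hang it on does not exist. There is no short exact sequence of chain complexes $0 \to GC^-(G_+) \to GC^-(G)/(V_a+V_b-V_c-V_d) \to GC^-(\mathcal{R}_G) \to 0$: the complexes for $G_+$ and $\mathcal{R}_G$ are built from $(m+1)\times(m+1)$ grid diagrams obtained by splitting the row and column of $X_v$, while the middle term comes from the $m\times m$ diagram, so already in the blocked setting the underlying spaces have dimensions $(m+1)!$, $m!$ and $(m+1)!$, and the middle term cannot receive an injection from the first while surjecting onto the third. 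The three terms of \eqref{a} are related only by an exact triangle, not by a chain-level short exact sequence, and producing that triangle requires an ingredient your plan omits: an auxiliary complex through which multiplication by $V_a+V_b-V_c-V_d$ factors.

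Concretely, the paper forms the two stabilized diagrams $\mathbb{G}_A$ and $\mathbb{G}_B$ for $G_+$ and $\mathcal{R}_G$, which share the same grid states; it identifies $GC^-(\mathbb{G})$ with the subcomplex $Z$ of states containing the new corner $p$ (Lemma \ref{lemma3}) and sets $Y=GC^-(\mathbb{G}_B)/Z$, a complex which is not the grid complex of any of the three graphs; it defines $\Phi_A:Z\to Y$ and $\Phi_B:Y\to Z$ by counting rectangles crossing exactly one of the new markings; it computes $\Phi_B\circ\Phi_A=\Phi_A\circ\Phi_B=V_a+V_b-V_c-V_d$ by counting the four annuli through $p$ (Lemma \ref{lemma4}); and it identifies the mapping cones of $\Phi_A$ and $\Phi_B$ with the grid complexes of the two resolutions (Lemma \ref{lemma5}). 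Sequence \eqref{a} is then the long exact sequence relating the cones of $f$, $g$ and $g\circ f$ from Proposition \ref{prop1}, together with the fact that the cone of the injective map $V_a+V_b-V_c-V_d$ on $GC^-(\mathbb{G})$ is quasi-isomorphic to its cokernel. Note also that your treatment of \eqref{c} and \eqref{d} by ``swapping the splittings'' is not what is needed: the paper instead applies an RIV move to $G$ to obtain $G'$ with $G'_+=\mathcal{R}_G$ and $\mathcal{R}_{G'}=G_-$, so that \eqref{c} and \eqref{d} follow from \eqref{a} and \eqref{b} applied to $G'$. Without the intermediate complex $Y$ and the factorization of $V_a+V_b-V_c-V_d$, the ``direct bookkeeping exercise'' you defer to cannot be carried out.
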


\begin{figure}[t]
	\def\svgwidth{\columnwidth}
	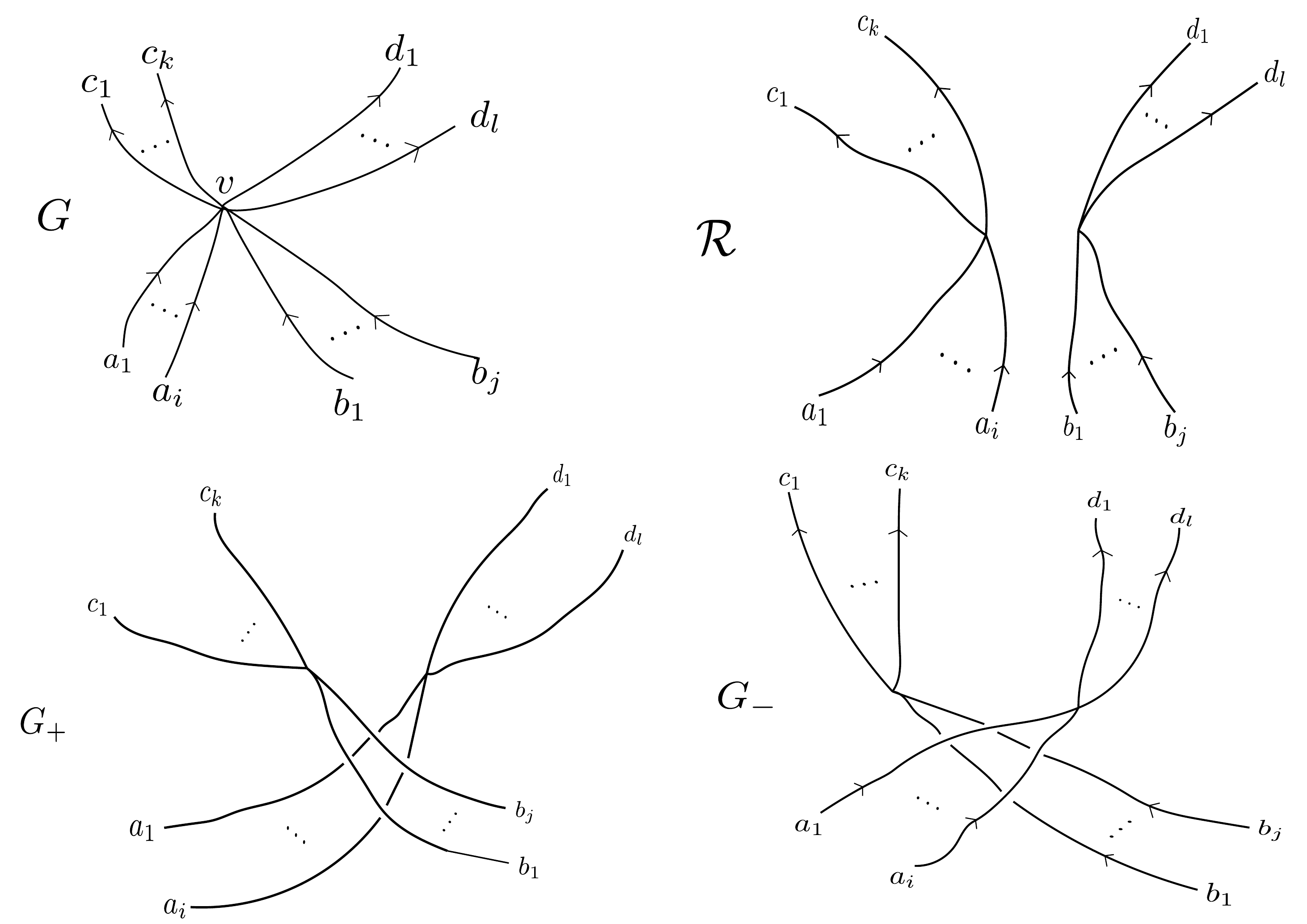
	\caption{The resolutions at a vertex} 
	\label{graph4}
\end{figure}

\textbf{Remark}. When $G_+(G_-)$ is a knot/link, the above proposition is just Theorem 4.1 of \cite{https://doi.org/10.1112/jtopol/jtp032}.

Since $\mathbb{G}$ is a preferred grid diagram, the $O_a, O_b$ are below $A$ and $B$, and $O_c, O_d$ are to the right of $A,B$. Define $R_{X_v}(C_{X_v})$ to be the row(column) which X lies on. Ignore $X$ first, split $R_X$ into two rows, so that $O_a$ and $O_b$ are separated. Similarly, split $C_X$ into two columns, so that $O_c$ and $O_d$ are separated(See Fig.\ref{graph5})

Let $\mathbb{X}_0$ be the $m-1$ $X$'s in the complement. The original square of $X_v$ becomes a $2 \times 2$ grid which lie at the intersection of the two new rows and the two new columns. Mark the upper -left and lower-right squares by $A$, and mark the upper-right and lower-leftsquares by $B$. We call these pairs of squares $\mathbb{A}$ and $\mathbb{B}$ respectively. If we add an $\mathbb{X}$ on each square of $\mathbb{A}$, we get a grid diagram $\mathbb{G}_A$ for the spatial graph $G_+$; If we add an $X$ on each square of $\mathbb{B}$, we get a grid diagram $\mathbb{G}_B$ for the spatial graph $\mathcal{R}$.

Denote by $p$ the intersection point of the two $A$'s and the two $B$'s. Consider the submodule $Z$ of $GC^-(\mathbb{G}_B)$ which is generated by all states containing the corner $p$. It is a subcomplex of $GC^-(\mathbb{G}_B)$:  For any $\textbf{x} \in Z, \textbf{y}\in GC^-(\mathbb{G}_B), \textbf{y}\notin Z$, an $r \in Rect(\textbf{x},\textbf{y})$ must contain some $B \in \mathbb{B}$, which is not allowed in $GC^-(\mathbb{G}_B)$, since the two $B$'s have been decorated by $X$.

\begin{figure}[t]
	\def\svgwidth{\columnwidth}
	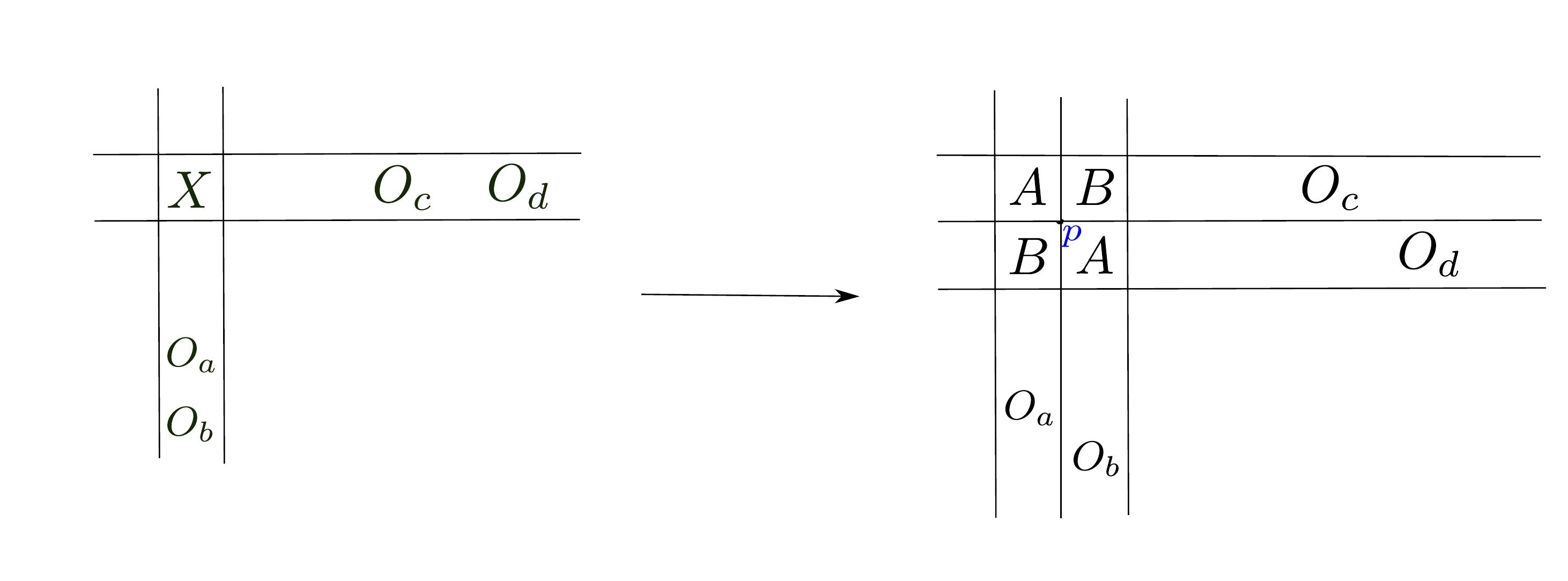
	\caption{Split the column(row) of $X_v$ into two} 
	\label{graph5}
\end{figure}

\begin{lemma} \label{lemma3}
	$Z$ is chain homotopic to $GC^-(\mathbb{G})$.
\end{lemma}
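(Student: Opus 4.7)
The plan is to exhibit an explicit bigraded $\mathbb{F}[V_1,\dots,V_m]$-linear isomorphism $\phi\colon GC^-(\mathbb{G})\to Z$, from which the stated chain homotopy equivalence follows a fortiori. The key structural observation is that $\mathbb{G}_B$ differs from $\mathbb{G}$ only by the addition of one extra horizontal circle and one extra vertical circle, whose unique intersection point is $p$; consequently every generator of $Z$ has exactly one point at $p$, and its remaining $m$ points are distributed on the horizontal and vertical circles inherited from $\mathbb{G}$.

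I first define $\phi$ on generators by $\phi(\mathbf{x}')=\mathbf{x}'\cup\{p\}$ for $\mathbf{x}'\in\mathbf{S}(\mathbb{G})$, and extend $\mathbb{F}[V_1,\dots,V_m]$-linearly. Since $\mathbb{G}$ and $\mathbb{G}_B$ carry the same $O$-markings in the same positions, the two module structures agree, and the observation above shows $\phi$ bijects $\mathbf{S}(\mathbb{G})$ onto the generating set of $Z$.

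Next I verify that $\phi$ intertwines the two differentials via a ``same four corners'' bijection between the rectangles contributing to each side. In one direction, given $r\in Rect^\circ(\mathbf{x}',\mathbf{y}')$ in $\mathbb{G}$ with $Int(r)\cap\mathbb{X}=\emptyset$, the fact that $X_v\notin Int(r)$ means that the whole $2\times 2$ block of $\mathbb{G}_B$, which sits inside the old $X_v$-square, lies outside $Int(r)$; thus $r$, re-read on the finer grid, automatically avoids $p$, both $B$-markings, and the untouched $\mathbb{X}_0$, so $r\in Rect^\circ(\phi(\mathbf{x}'),\phi(\mathbf{y}'))$ is empty and disjoint from $\mathbb{X}_{\mathbb{G}_B}$. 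In the reverse direction, the corners of any such $r'$ lie in $\phi(\mathbf{x}')\triangle\phi(\mathbf{y}')=\mathbf{x}'\triangle\mathbf{y}'$ and hence on the original $\mathbb{G}$-grid; and if its $\mathbb{G}$-interior contained $X_v$, then it would contain the entire $2\times 2$ block and in particular $p$, contradicting emptiness since $p\in\phi(\mathbf{x}')$. The $V$-weights $V_1^{O_1(r)}\cdots V_m^{O_m(r)}$ coincide because the $O$-placement and the rectangle interior are unchanged.

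The step I expect to be the main obstacle is precisely this rectangle correspondence: one must rule out spurious rectangles introduced by the refinement of the grid that lack any $\mathbb{G}$-counterpart, and the pinning of $p$ in every state of $Z$ is exactly what forces every contributing rectangle to avoid the entire $2\times 2$ block. After this, preservation of the relative Maslov and Alexander gradings is immediate: both are computed from $\#(r\cap\mathbb{O})$, $\#(r\cap\mathbb{X})$, and $\#(\mathbf{x}\cap Int(r))$, and all three counts transport identically under the bijection---in particular, on either side the valid rectangles meet $X_v$ (resp.\ both $B$-markings) zero times, so the Alexander contributions match. Hence $\phi$ is a bigraded isomorphism of chain complexes, which in particular yields the desired chain homotopy equivalence between $Z$ and $GC^-(\mathbb{G})$.
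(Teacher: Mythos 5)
Your proposal is correct and follows essentially the same route as the paper: both define the map by $\mathbf{x}\mapsto\mathbf{x}\cup\{p\}$, establish a bijection between the empty rectangles avoiding $\mathbb{X}$ on either side (the paper phrases this by describing how a rectangle crossing $R_{X_v}$ or $C_{X_v}$ splits on the refined grid, you phrase it via matching corners and the pinning of $p$), and conclude that the map is a bigraded chain isomorphism, which is stronger than the stated homotopy equivalence.
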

\begin{proof}
	Construct $P: GC^-(\mathbb{G}) \longrightarrow Z$ as follows.
	
	For $x \in S(\mathbb{G})$, let $x \cup \{p\}$ be the state of $S(\mathbb{G}_B)$ obtained by adding $p$ to $\textbf{x}$. This is a bijection from $S(\mathbb{G})$ to the subset of  $S(\mathbb{G}_B)$ containing $p$. Let $\textbf{x},\textbf{y} \in \textbf{S}(\mathbb{G})$, $r \in Rect^\circ(\textbf{x},\textbf{y})$, define $P(r) \in Rect^ \circ(\textbf{x} \cup\{p\},\textbf{y} \cup \{p\})$ as follows: If $r$ does not intersect with the interior of $C_{X_v}$ and $R_{X_v}$, then $r$ naturally induces a rectangle $P(r)$ from $\textbf{x} \cup \{p\}$ to $\textbf{y} \cup \{p\}$; If $r$ intersects with the interior of $R_{X_v}$, the intersection is a $1 \times k$ rectangle, which splits into a $2 \times k$ rectangle in $\mathbb{G}_B$. Define $P(r)$ to be the rectangle containing this $2 \times k$ rectangle. The definition is similar when $r$ intersects with the interior of $C_{X_v}$(See Figure \ref{graph6} ). It is easy to see $r \longrightarrow P(r)$ gives a bijection from $Rect^\circ(\textbf{x},\textbf{y})$ to $Rect^\circ(\textbf{x} \cup \{p\},\textbf{y}\cup \{p\})$, and
	\begin{equation}
		\begin{aligned}
				\partial^- \circ P(\textbf{x})&=\partial^-(\textbf{x} \cup\{p\})  \\
				&= \sum_{\textbf{y} \in \textbf{S}(\mathbb{G}_B)} \sum_{\substack{r \in Rect^\circ(\textbf{x} \cup \{p\},\textbf{y}\cup \{p\}) \\ r \cap \mathbb{X}_0 = \emptyset}}  V_1^{O_1(r)} \cdots V_{m+1}^{O_{m+1}(r)} \textbf{y} \cup \{p\}  	\\
				&=P \circ \partial^-(\textbf{x})
		\end{aligned}
		\end{equation}
	
This shows that $P$ is a chain complex isomorphism, which obviously preserves the ralative Maslov and Alexander grading.
	
\end{proof}

	\begin{figure}[t]
		\def\svgwidth{\columnwidth}
		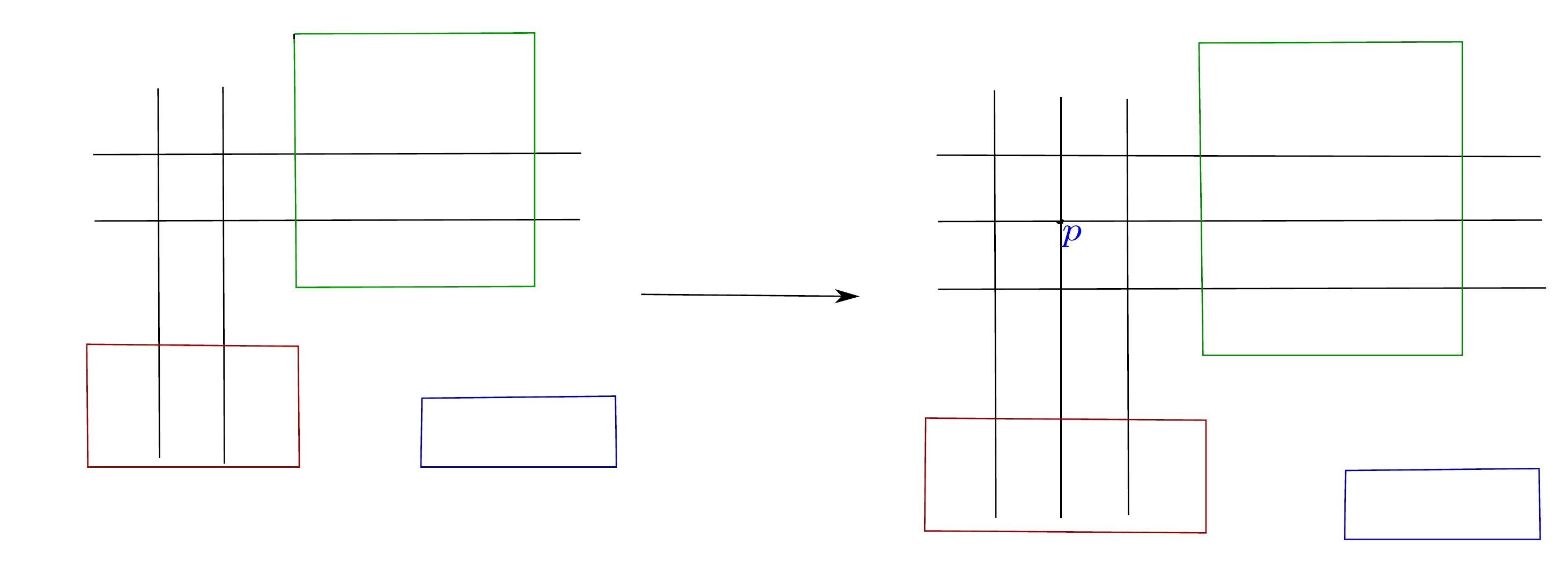
		\caption{The correspondence of the rectangles in $GC^-(\mathbb{G})$ and  $Z$}
		\label{graph6}
	\end{figure}

	Let $Y$ be the quotient complex $GC^-(\mathbb{G}_B)/Z$. As a module, $Y$ is generated by states not containing $p$, and the differential $\partial_Y$ is defined by counting rectangles that do not contain elements in $\mathbb{X} \cup \mathbb{A} \cup \mathbb{B}$. This shows that $Y$ is a subcomplex of $GC^-(\mathbb{G}_A)$. We claim that the quotient complex
	\begin{equation}
		GC^-(\mathbb{G_A}/Y) \cong Z
	\end{equation}
	
	As a module, $	GC^-(\mathbb{G_A})/Y$ is generated by all states that contain $p$, and the differential is defined by counting rectangles that do not contain elemnts in $\mathbb{X} \cup \mathbb{A} \cup \mathbb{B}$(Rectangles containing elements in $\mathbb{B}$ can be omited since their targets are elements in $Y$). This gives the isomorphism.
	
	Define a map
	\begin{equation}
		\begin{aligned}
			\Phi_A :Z &\longrightarrow Y
			\\
			\textbf{x} &\longrightarrow \sum_{\textbf{y} \in \textbf{S}(\mathbb{G}_B)} \sum_{\substack{r \in Rect^\circ (\textbf{x},\textbf{y})  \\ r \text{ contains exactly 1 }B \\ r \cap (\mathbb{X}\cup \mathbb{A})= \emptyset }}
			V_1^{O_1(r)}\cdots  V_{m+1}^{O_{m+1}(r)   } \textbf{y}
		\end{aligned}
	\end{equation}
	$\Phi_A$ is a chain map: As in Prop.\ref{1}, for $\psi \in \pi(\textbf{x},\textbf{y})$, let $N(\psi)$ be the number of ways we can decompose $\psi$ as the composition of 2 empty rectangles, we have  
	
	\begin{equation}\label{eq1}
		\left(\partial_Y \circ \Phi_A + \Phi_A \circ \partial_Z \right)(\textbf{x})= \sum_{\textbf{y} \in \textbf{S}(\mathbb{G}_B)} \sum_{\substack{\psi \in Rect^\circ (\textbf{x},\textbf{y})  \\ \psi \text{ contains exactly 1 }B \\ \psi \cap (\mathbb{X}\cup \mathbb{A})= \emptyset }}
		V_1^{O_1(\psi)}\cdots  V_{m+1}^{O_{m+1}(\psi)   } \textbf{y}
	\end{equation}
	
	As in the proof of Prop.\ref{1} , when $\text{x} \backslash \text{x}\cap\text{y}$ consists of 4 or 3 elements, $N(\psi)=2$, so they contribute nothing to the right of Equation (\ref{eq1}).
	However, this time there is no longer the $\textbf{x}=\textbf{z}$ case: an annulus containing $B$ must contains an $A$, which is not allowed. Therefore the right-hand side of      is 0 and $\Phi_A$ commutes with the differential(Since we work in mod2 coefficient).

	Similarly, we can define
	\begin{equation}
		\begin{aligned}
			\Phi_B :Y &\longrightarrow Z
			\\
			\textbf{y} &\longrightarrow \sum_{\textbf{z} \in \textbf{S}(\mathbb{G}_B)} \sum_{\substack{r \in Rect^\circ (\textbf{y},\textbf{z})  \\ r \text{ contains exactly 1 }A \\ r \cap (\mathbb{X}\cup \mathbb{B})= \emptyset }}
			V_1^{O_1(r)}\cdots  V_{m+1}^{O_{m+1}(r)   } \textbf{z}
			\end{aligned}
	\end{equation}
	and prove that $\Phi_B$ is a chain map as above.
	
	\begin{lemma}\label{lemma4}
		The composition $\Phi_A \circ \Phi_B$,$\Phi_B \circ \Phi_A$ are equal to multiplication by $V_a +V_b-V_c-V_d$.
	\end{lemma}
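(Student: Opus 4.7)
The plan is to run the same rectangle-decomposition accounting used in the proof of Proposition~\ref{1}. I expand $(\Phi_B \circ \Phi_A)(\textbf{x})$ and $(\Phi_A \circ \Phi_B)(\textbf{y})$ as double sums over pairs of empty rectangles $(r_1, r_2)$ subject to the $\Phi_A$- and $\Phi_B$-constraints (one of the rectangles contains exactly one $B$ and no element of $\mathbb{X}\cup\mathbb{A}$; the other contains exactly one $A$ and no element of $\mathbb{X}\cup\mathbb{B}$), and regroup the terms by composite domain $\psi = r_1 * r_2$ weighted by the multiplicity $N(\psi)$ of admissible decompositions. Thus $\psi$ always carries exactly one $A$-marking, exactly one $B$-marking, and no $X$-marking.

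Exactly as in Proposition~\ref{1}, any composite domain whose source and target states differ in $3$ or $4$ elements admits precisely two decompositions into empty rectangles, so the corresponding contribution vanishes in $\mathbb{F}$. The only surviving candidates are (a) periodic domains (the target equals $\textbf{x}$) and (b) possible "thin" composites, in which the source and target differ in only $2$ elements but $N(\psi)=1$. For case (b), the constraint that the intermediate state lies in the opposite subcomplex (off $p$ for a transition into $Y$, on $p$ for a transition into $Z$), combined with the requirement that $\psi$ contain one $A$, one $B$, and no $X$, should rule out every such thin composite: any two rectangles meeting those conditions would have to double a marking of the same type or engulf an $X \in \mathbb{X}_0$. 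This is the same local phenomenon as in the knot case in \cite{https://doi.org/10.1112/jtopol/jtp032}.

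For (a), periodic domains on the torus decompose into horizontal row-annuli and vertical column-annuli; the marking condition isolates exactly four of them, the two new row-annuli across the split of $R_{X_v}$ and the two new column-annuli across the split of $C_{X_v}$. Because $\mathbb{G}$ is preferred, the flock $O_a$ is confined to one of the new columns and $O_b$ to the other, while $O_c$ and $O_d$ lie respectively in the two new rows; hence the four annuli carry the $O$-weights $V_a, V_b, V_c, V_d$. Within each annulus, the two cut lines are pinned down by the positions of $\textbf{x}$ on the boundary circles, and the intermediate-state condition (the middle state must avoid $p$) forces a unique assignment of the $B$-piece and the $A$-piece to $r_1$ and $r_2$, so $N(\psi)=1$. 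Summing the four contributions and using $-1 = 1$ in $\mathbb{F}$, one recovers multiplication by $V_a + V_b - V_c - V_d$.

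The main obstacle will be the careful local verification around the split $2 \times 2$ block: ruling out every thin composite in case (b) under the simultaneous $A/B/X$ and intermediate-state constraints, and confirming that on each of the four surviving annuli the assignment of $A$ and $B$ to $r_1$, $r_2$ really is forced and unique (so the multiplicity is $1$, not $2$). Once this bookkeeping is settled, the rest of the argument is a direct translation of the proof of Proposition~\ref{1} and of the corresponding step for singular knots in \cite{https://doi.org/10.1112/jtopol/jtp032}.
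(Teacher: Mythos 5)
Your proposal is correct and follows essentially the same route as the paper, whose entire proof is the observation that the only surviving composite domains are the four thin annuli through $p$ (two vertical contributing $V_a$ and $V_b$, two horizontal contributing $-V_c$ and $-V_d$), each with a unique admissible decomposition. The ``thin composite'' case (b) that you flag as the main remaining obstacle is in fact vacuous --- a composite of two rectangles always changes $4$, $3$, or $0$ points of a grid state, never $2$ --- so no verification is needed there beyond the standard cancellation already used in Proposition~\ref{1}.
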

	\begin{proof}
		$\Phi_A \circ \Phi_B$ counts only the four annuli that contain $p$ on their boundary. The vertical annuli contribute $V_a$ and $V_b$, while the vertical ones contribute $-V_c$ and $-V_d$. The same is true for $\Phi_B \circ \Phi_A$.
	\end{proof}

	\textbf{Remark}. Since we are working in $\mathbb{F}$-coefficients, the signs appearing above are irrevalent. They are suggested by the sign conventions in the $\mathbb{Z}$-coefficient version.  

   \begin{lemma}\label{lemma5}
   	The mapping cone $C(\Phi_A)$ is chain isomorphic to $GC^-(\mathbb{G}_B)$, and the mapping cone $C(\Phi_B)$ of $\Phi_B$ is chain isomorphic to $GC^-(G_B)$.
   \end{lemma}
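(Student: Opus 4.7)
The plan is to set up an explicit module isomorphism $C(\Phi_A)\cong GC^-(\mathbb{G}_B)$ (respectively $C(\Phi_B)\cong GC^-(\mathbb{G}_A)$) and then verify that the differentials agree term by term. As a module, $GC^-(\mathbb{G}_B)$ is freely generated over $\mathcal{R}$ by $\textbf{S}(\mathbb{G}_B)$, and these states split into two disjoint sets according to whether they contain the corner point $p$ or not. So on the module level we get a canonical identification with $Z\oplus Y$: the generators containing $p$ form $Z$, and those not containing $p$ form $Y$. The same identification works for $GC^-(\mathbb{G}_A)$, and on the mapping cone side, $C(\Phi_A)$ and $C(\Phi_B)$ are by definition built on $Z\oplus Y$. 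So the content of the lemma is purely about the differential.

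Next I would decompose $\partial^-$ on $GC^-(\mathbb{G}_B)$ with respect to the splitting $Z\oplus Y$ by a case analysis on an empty rectangle $r\in Rect^\circ(\textbf{x},\textbf{y})$ whose interior avoids $\mathbb{X}_0\cup\mathbb{B}$. For rectangles with both endpoints in $Z$, the point $p$ is not a corner of $r$ (otherwise one of $\textbf{x},\textbf{y}$ would lose $p$), so these are exactly the rectangles counted by $\partial_Z$. For rectangles going from $Z$ to $Y$, the point $p$ must be a source corner of $r$, which forces the interior of $r$ to contain one of the two $B$-squares (the upper-right or lower-left of $p$); since $\mathbb{B}$ carries $X$-markings in $\mathbb{G}_B$, this component vanishes, which is the reason $Z$ is a subcomplex. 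For rectangles going from $Y$ to $Z$, the point $p$ must be a target corner; the interior then contains an $A$-square and, by the same local $2\times 2$ inspection, cannot contain a $B$-square, so these rectangles are legal in $\mathbb{G}_B$ and are exactly those counted by $\Phi_B$.

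The remaining case, rectangles from $Y$ to $Y$, is the most delicate. Here $p$ is not a corner of $r$, and the required verification is that any such rectangle automatically avoids $\mathbb{A}$ as well (so that $\partial_Y$ as defined by counting rectangles disjoint from $\mathbb{X}\cup\mathbb{A}\cup\mathbb{B}$ coincides with the quotient differential inherited from $\mathbb{G}_B$). The argument is a local inspection at $p$: if the interior of $r$ meets one of the two $A$-squares, then because $p$ cannot be a corner of $r$, the rectangle must extend one cell further in the direction of the $B$-square diagonally opposite the $A$-square, and therefore also contain a $B$-square, contradicting that $r$ is admissible in $\mathbb{G}_B$. Putting the four cases together shows that $\partial^-$ on $GC^-(\mathbb{G}_B)$ has the block form
\begin{equation*}
\begin{pmatrix}\partial_Z & \Phi_B \\ 0 & \partial_Y\end{pmatrix},
\end{pmatrix*}
which is precisely the mapping cone differential on $C(\Phi_B)$, and the identity map on $Z\oplus Y$ is the desired chain isomorphism; the Maslov and Alexander gradings agree because they are read off from the positions of $O$- and $X$-markings, which are the same on both sides. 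The argument for $GC^-(\mathbb{G}_A)$ and $C(\Phi_A)$ is completely symmetric, swapping the roles of $\mathbb{A}$ and $\mathbb{B}$.

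I expect the main obstacle to be the geometric classification in the $Y\to Y$ case, where one must rule out rectangles whose interior hits $\mathbb{A}$ even though avoiding $\mathbb{A}$ is not part of the $\mathbb{G}_B$-admissibility condition; this is what makes the two descriptions of $\partial_Y$ (as a quotient of $\mathbb{G}_B$ versus as a subcomplex of $\mathbb{G}_A$) agree, and it is what guarantees that there are no unaccounted-for terms spoiling the mapping cone identification. Everything else is bookkeeping on which $V_i$-power a given rectangle contributes, and this transfers directly between the two descriptions because the $O$-markings and their labels are unchanged by the splitting procedure.
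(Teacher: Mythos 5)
Your proof is correct and follows essentially the same route as the paper's: identify the underlying module of the mapping cone with the splitting of grid states according to whether they contain $p$, and check that the differential of $GC^-(\mathbb{G}_B)$ has the upper-triangular block form of the cone of $\Phi_B$ (and symmetrically for $GC^-(\mathbb{G}_A)$ and $\Phi_A$, which is surely what the lemma's statement intends despite its typo). You are in fact more careful than the paper in the $Y\to Y$ case, where the paper simply asserts that the quotient differential counts rectangles avoiding $\mathbb{A}$; your local inspection at $p$ is the right justification, with the one quibble that the $B$-square forced into the interior is edge-adjacent to the offending $A$-square rather than diagonally opposite it.
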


\begin{proof}
	By definition, the underlying module of $C(\Phi_B)$ is $Y \oplus X$, and the differential
	\begin{equation}
		\begin{aligned}
	Y \oplus X &\longrightarrow Y \oplus X  \\
	(y,x) &\longrightarrow (\partial_Y(y), \Phi_A(y)+ \partial_X(x))
		\end{aligned}
	\end{equation}
On the other hand, the differential $\partial$ of $GC^-(G_B)$ is 
\begin{equation}
	\begin{aligned}
		Y \oplus X &\longrightarrow Y \oplus X  \\
		(y,x) &\longrightarrow \partial_{\mathbb{G}_B}(y)+  \partial_{\mathbb{G}_B}(x)
	\end{aligned}
\end{equation}
where $\partial_{\mathbb{G}_B}(x)=\partial_X(x)$, $\partial_{\mathbb{G}_B}(y)= \partial_Y(y)+ \Phi_A(y)$, therefore
\begin{equation}
	\partial_{\mathbb{G}_B}(y)+  \partial_{\mathbb{G}_B}(x)= \partial_X(x)+\partial_Y(y)+ \Phi_A(y)=(\partial_Y(y), \Phi_A(y)+ \partial_X(x))
\end{equation}

The proof of the latter half statement is similar.
\end{proof}

Recall the following properties on mapping cone:
\begin{proposition}\label{prop1}
	(1) If $f:C \longrightarrow C'$ is an injective chain map, then there is a quasi-isomorphism $\phi: Cone(f) \longrightarrow \frac{C'}{f(C)}$;
	
	(2) Suppose that $f:C \longrightarrow C'$, $g:C' \longrightarrow C"$ are two chain maps, then there is a long exxact sequence:
	\begin{equation}
		\cdots \longrightarrow H(Cone(g)) \longrightarrow H(Cone(f)) \longrightarrow H(Cone(g \circ f)) \longrightarrow H(Cone(g)) \cdots
	\end{equation} 
\end{proposition}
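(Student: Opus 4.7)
\textbf{Proof plan for Proposition \ref{prop1}.}

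For part (1), the plan is to take $\phi$ to be the composition of the projection $\mathrm{Cone}(f) = C \oplus C' \to C'$ with the quotient $C' \to C'/f(C)$. Checking that $\phi$ intertwines differentials is immediate with the cone convention already used in Lemma \ref{lemma5}: the extra term $f(c)$ produced by the cone differential is killed by the quotient. The kernel of $\phi$ is the subcomplex of pairs $(c, c')$ with $c' \in f(C)$; using injectivity of $f$ to write $c' = f(c_1)$ uniquely, this kernel is identified as a graded module with $C \oplus C$ via $(c, f(c_1)) \leftrightarrow (c, c_1)$. A direct calculation shows the restricted differential becomes $(c, c_1) \mapsto (\partial c,\, c + \partial c_1)$, i.e.\ the mapping cone of $\mathrm{id}_C$. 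Since $\mathrm{Cone}(\mathrm{id}_C)$ is acyclic (an elementary contracting homotopy), the short exact sequence
\begin{equation*}
  0 \longrightarrow \ker \phi \longrightarrow \mathrm{Cone}(f) \longrightarrow C'/f(C) \longrightarrow 0
\end{equation*}
together with its induced long exact sequence forces $\phi$ to be a quasi-isomorphism.

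For part (2), I would introduce the natural chain map $F \colon \mathrm{Cone}(f) \to \mathrm{Cone}(gf)$ by $(c, c') \mapsto (c, g(c'))$; the chain map verification is one line. The standard short exact sequence of any mapping cone, applied to $F$, yields a long exact sequence
\begin{equation*}
  \cdots \longrightarrow H(\mathrm{Cone}(f)) \longrightarrow H(\mathrm{Cone}(gf)) \longrightarrow H(\mathrm{Cone}(F)) \longrightarrow H(\mathrm{Cone}(f)) \longrightarrow \cdots,
\end{equation*}
so the task reduces to producing a quasi-isomorphism $\mathrm{Cone}(F) \simeq \mathrm{Cone}(g)$. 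I propose the projection
\begin{equation*}
  \pi \colon \mathrm{Cone}(F) = (C \oplus C') \oplus (C \oplus C'') \longrightarrow \mathrm{Cone}(g) = C' \oplus C'',  \qquad \pi\bigl((c, c'), (c_1, c'')\bigr) = (c' + f(c_1),\, c'').
\end{equation*}
A routine calculation (in the same style as the check performed in Lemma \ref{lemma5}) shows $\pi$ is a surjective chain map; the combination $c' + f(c_1)$ is exactly what is needed to absorb the $F$--contribution in the compound cone differential. The kernel consists of tuples with $c'' = 0$ and $c' = f(c_1)$, and, after the obvious identification with $C \oplus C$, the induced differential is again that of $\mathrm{Cone}(\mathrm{id}_C)$ and hence acyclic. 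So the short exact sequence $0 \to \ker\pi \to \mathrm{Cone}(F) \to \mathrm{Cone}(g) \to 0$ promotes $\pi$ to a quasi-isomorphism, and substituting $H(\mathrm{Cone}(g))$ for $H(\mathrm{Cone}(F))$ in the sequence above yields statement (2).

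The main obstacle will be the bookkeeping for the cone-within-a-cone in part (2): one has to keep straight the four summands in $\mathrm{Cone}(F)$, write out the compound differential, and verify both that $\pi$ is a chain map and that $\ker \pi$ is precisely a copy of $\mathrm{Cone}(\mathrm{id}_C)$. Once these two verifications are in hand, the rest is formal, and the sign subtleties that ordinarily complicate the corresponding octahedral argument over $\mathbb{Z}$ disappear because we work throughout in $\mathbb{F} = \mathbb{Z}/2\mathbb{Z}$; part (1) is comparatively routine once the kernel of $\phi$ has been identified with a cone on the identity.
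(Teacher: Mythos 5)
Your proposal is correct and complete; note that the paper itself offers no proof of this proposition (it is merely ``recalled'' as standard homological algebra before the proof of Theorem \ref{thm1}), so you are supplying an argument where the paper has none. Both of your verifications check out over $\mathbb{F}=\mathbb{Z}/2\mathbb{Z}$ with the cone convention of Lemma \ref{lemma5}: in (1) the kernel of $\phi$ is $\{(c,f(c_1))\}$ with induced differential $(c,c_1)\mapsto(\partial c,\,c+\partial c_1)$, i.e.\ $\mathrm{Cone}(\mathrm{id}_C)$, contracted by $H(c,c_1)=(c_1,0)$; and in (2) the map $F(c,c')=(c,g(c'))$ is a chain map, your $\pi$ satisfies $\pi\circ\partial_{\mathrm{Cone}(F)}=\partial_{\mathrm{Cone}(g)}\circ\pi$ (the cross-terms $f(c)+f(c)$ cancel mod $2$), and $\ker\pi$ is again a cone on the identity. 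One point worth making explicit, since it matters for how the proposition is used in the proof of Theorem \ref{thm1}: your quasi-isomorphism is compatible with the natural maps of the triangle, because the composite $\mathrm{Cone}(gf)\hookrightarrow\mathrm{Cone}(F)\xrightarrow{\pi}\mathrm{Cone}(g)$ sends $(c_1,c'')$ to $(f(c_1),c'')$, which is exactly the standard map $\mathrm{Cone}(gf)\to\mathrm{Cone}(g)$; so the long exact sequence you obtain is the intended one and not merely an abstract sequence with the right terms. The only cosmetic caveat is that the proposition suppresses the degree shift in the connecting homomorphism, which is harmless here since the paper works with relative gradings.
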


\textbf{Proof of Theorem \ref{thm1}  }:  By Prop.\ref{prop1}(1), the mapping cone of multiplication by $V_a+V_b-V_c-V_d: X \longrightarrow X$ is a quasi-isomorphic to $\frac{X}{V_a+V_b-V_c-V_d}$. Then Equation \ref{a} follows from  Lemma  \ref{lemma3},\ref{lemma4}  \ref{lemma5} and Prop.\ref{prop1}(2). Choose a set $\{V_e\}$, one for each edge $e$, Equation (2)  follows by letting these $V_e=0$.

It remains to prove the part for $G_-$. Indeed, this follows directly from (1)(2) by the following observation: If we operate an RIV move on $G$, we get a spatial graph $G'$(equivalent to $G$). The two resolutions $G'_+$, $\mathcal{R}_{G'}$ of $G'$ are respectively $\mathcal{R}_G$ and $G^-$(See Fig.\ref{graph7}).

\begin{figure}[t]
	\def\svgwidth{\columnwidth}
	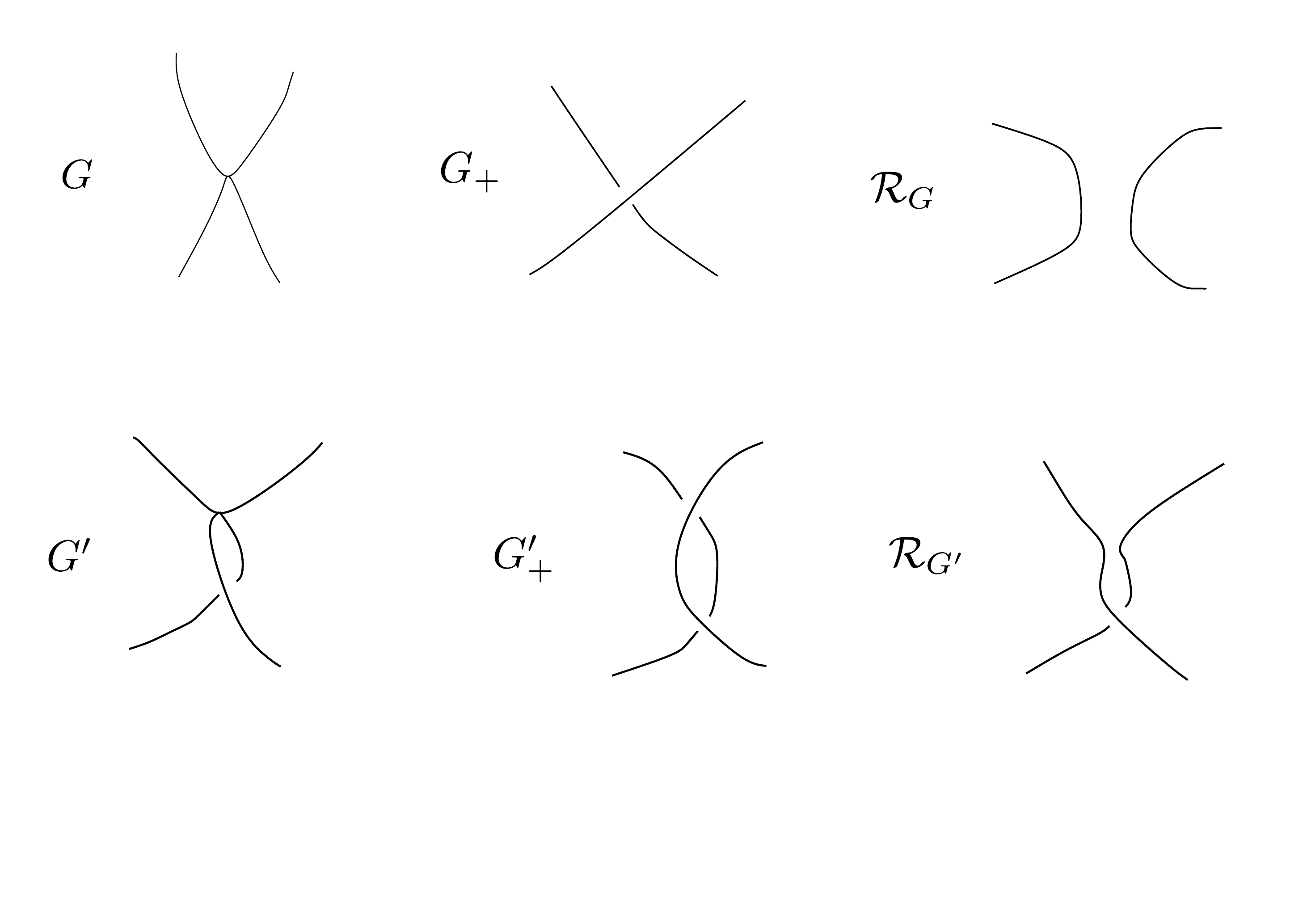
	\caption{Resolutions of $G$ and $G'$} 
	\label{graph7}
\end{figure}

\bibliography{gh}
\nocite{*}
\end{document}